\newcommand{\algref}[1]{Procedure~\ref{#1}}
\theoremstyle{plain}
\newtheorem{thm}{Theorem}[section]
\newtheorem{prop}[thm]{Proposition}
\newtheorem{lemma}[thm]{Lemma}
\theoremstyle{definition}
\newtheorem{ex}[thm]{Example}
\theoremstyle{remark}
\newtheorem{rem}[thm]{Remark}
\newcommand{\propref}[1]{Proposition~\ref{#1}}
\newcommand{\lemmaref}[1]{Lemma~\ref{#1}}
\newcommand{\secref}[1]{Section~\ref{#1}}
\newcommand{\remref}[1]{Remark~\ref{#1}}
\newcommand{\tabref}[1]{Table~\ref{#1}}
\newcommand{\NN}{\mathbb{N}}
\newcommand{\ZZ}{\mathbb{Z}}
\newcommand{\CC}{\mathbb{C}}
\newcommand{\PP}{\mathbb{P}}
\newcommand{\codim}[1]{\operatorname{codim} (#1)}
\newcommand{\Gr}[2]{\operatorname{\mathbb{G}} (#1,#2)}
\newcommand{\td}[1]{\operatorname{td} (#1)}
\newcommand{\ch}[1]{\operatorname{ch} (#1)}
\newcommand{\ppc}[4]{\operatorname{ppc} (#1,#2,#3,#4)}
\newcommand{\ED}[1]{\operatorname{ED} (#1)}
\newcommand{\Pic}[1]{\operatorname{Pic} (#1)}
\renewcommand{\dim}[1]{\operatorname{dim} (#1)}
\renewcommand{\deg}[1]{\operatorname{deg} (#1)}
\begin{document}
\title[]{Numerical Polar calculus and cohomology of line bundles}
\author[S. Di Rocco]{Sandra Di Rocco}
\address{Department of Mathematics, KTH, 100 44 Stockholm, Sweden}
\email{dirocco@math.kth.se}
\urladdr{http://www.math.kth.se/$\sim$sandra}
\author[D. Eklund]{David Eklund}
\address{Department of Mathematics, KTH, 100 44 Stockholm, Sweden} 
\email{daek@math.kth.se}
\urladdr{http://www.math.kth.se/$\sim$daek}
\author[C. Peterson]{Chris Peterson}
\address{Department of Mathematics, Colorado State University, Fort Collins, CO 80523}
\email{peterson@math.colostate.edu}
\urladdr{http://www.math.colostate.edu/$\sim$peterson}

\begin{abstract}
Let $L_1,\dots,L_s$ be line bundles on a smooth variety $X\subset
\mathbb{P}^r$ and let $D_1,\dots,D_s$ be divisors on $X$ such that $D_i$
represents $L_i$. We give a probabilistic algorithm for computing the
degree of intersections of polar classes which are in turn used for
computing the Euler characteristic of linear combinations of
$L_1,\dots,L_s$. The input consists of generators for the homogeneous
ideals $I_X, I_{D_i} \subset \mathbb{C}[x_0,\ldots,x_r]$ defining $X$ and
$D_i$.
\end{abstract}

\maketitle
\section{Introduction}

Let $X \subset \PP^r$ be a smooth $n$-dimensional variety with $n<r$. For
$0\leq j \leq n$ and $V \subseteq \PP^r$ a general linear subspace of
dimension $(r-n-2)+j$, let
$$P_j(X)=\{x \in X: \dim{T_xX \cap V} \geq j-1\}.$$ 
If $X$ has codimension $1$ and
$j=0$, then $V$ is the empty set (rather than a linear subspace) with  the
convention $\dim{\emptyset}=-1.$ Note
that $P_j(X)$ depends on the choice of $V$ even though we have
suppressed this in the notation. However, for general $V$, the class
$[P_j(X)] $ in the Chow ring of the variety  $A_{\ast}(X)$ does not depend on $V$. In fact, for
general $V$, $P_j(X)$ is either empty or of pure codimension $j$ in
$X$ and
$$[P_j(X)]=\sum_{i=0}^j (-1)^i \binom{n-i+1}{j-i}H^{j-i}c_i,$$ where $H
\in A_{n-1}(X)$ is the hyperplane class
and $c_i$ is the $i^{th}$ Chern class of $X.$ In this
setting, $P_j(X)$ is called a $j^{th}$ \emph{polar locus} of $X$ and $[P_j(X)]$ is called the $j^{th}$ \emph{polar class}. 
Because of the close relationship between polar classes and Chern classes, computation of the degrees of intersections of one is equivalent to computation of
the degrees of intersections of the other. In particular the numerical algorithms developed in \cite{DEPS,EBP}
for computing intersection numbers of Chern classes will be used in this note in order to develop an algorithm 
for computing polar degrees and the degrees of intersection of polar classes, \algref{alg:polardegrees} and \algref{alg:polarproducts}. We call these computations ``Numerical Polar Calculus" and will denote the algorithm by {\bf NPC}. Immediate applications of the computation include degree of the discriminant locus and the Euclidean Distance degree, as explained in Subsection \ref{dual}. 
A Macaulay2 algorithm for computing polar classes of (not necessarily smooth or normal) toric varieties has been
previously developed in \cite{HS}.
While applications of NPC are multiple, we focus on uses of the algorithm that 
 we regard as particularly interesting for the applied and computational algebraic geometry community.
In \propref{prop:reduction}  and \algref{alg:general} we illustrate how NPC can be used to develop an algorithm for computing the {\it Euler Characteristic}
of a linear combination of divisors on $X$:
$$\chi(X,a_1D_1+\ldots +a_sD_s)= \sum_{i\geq 0} (-1)^i
\dim{H^i(a_1D_1+\ldots +a_sD_s)}$$ where the $D_i$ are smooth divisors
on $X$ meeting properly. More precisely, we require that any $d \leq
n$ of the $D_i$ meet in a subscheme every component of which has
codimension $d$. The key ingredients are the Hirzebruch-Riemann-Roch
formula and Adjunction formula as explained in Section \ref{intro}.

We briefly illustrate the motivating idea.
The Riemann-Roch theorem for a smooth projective curve $X$ embedded in $\PP^r$ by a line bundle $L$ gives a powerful and striking link between invariants of the line bundle and invariants of the curve:
$$\dim{H^0(X,L)}-\dim{H^1(X,L)}=\chi(X,L)=\deg{L}+1-g$$
where $g=\dim{H^0(X,\omega_X)}$ is the genus of $X.$ 
It can be  reinterpreted as an intersection formula involving the Chern classes of $X$ and $L$:
$$\chi(X,L)=[c_1(L)+\frac{1}{2} c_1(T_X)]\cdot [X]$$
 Using the Chern character approach, Hirzebruch generalized the Riemann-Roch theorem to an expression relating the Euler characteristic of a locally free sheaf $\mathcal E$ on a complex manifold $X$ to the Chern character of $\mathcal E$ and the Todd class of $X$. For a line bundle $L$ on $X$, the Hirzebruch-Riemann-Roch theorem states:
\begin{equation}\label{RR} \sum_{i\geq 0} (-1)^i\dim{H^i(X,L)}=\chi(X,L)=\int_X \ch{L}\td{X}\end{equation}
where $\ch{L}$ is the Chern character of $L$ and $\td{X}$ is the Todd class of $T_X.$ 
The integral over $X$ evaluates the top degree component of the class $\ch{L}\td{X}$ over $X$. 
The Chern character $\ch{L}$ can be expressed as a formal sum of subvarieties of codimension $i$ in $X$
$$\ch{L}=\sum_{i\geq 0}\frac{1}{i!}[L]^i$$
The Todd class can also be  given as a sum, $\td{X}=\td{T_X}=T_0+T_1+\ldots+T_n,$ whose components can be expressed using  Chern classes of $T_X$. The class $T_k$ is a weighted homogeneous polynomial of weighted degree $k$ in the Chern classes $c_1,\ldots,c_k,$ provided $c_i$ has weight $i:$ 
$$\td{T_X}=1+\frac{1}{2}c_1+\frac{1}{12}(c_1^2+c_2)+\frac{1}{24}c_1c_2+\ldots$$
Consequently the right-hand side of the Hirzebruch-Riemann-Roch formula (\ref{RR}) is computable by an algorithm similar to the ones introduced in \cite{DEPS,EBP}.

If $\dim{X}=2$ for instance the formula leads to the
expression $$\chi(X,L)=\frac{L(L-K_X)}{2}+\chi(X,{\mathcal O}_X).$$ Notice that, for simplicity in this paper we use additive notation for the group $\Pic{X}.$ 

When the line bundle is special, i.e. $H^i(X,L)=0$ for $i\geq 1$, the
left hand side of the Hirzebruch-Riemann-Roch formula for the line
bundles $aL$ reduces to $ \chi(X,aL)=\dim{H^0(X, aL)}.$ The graded
algebra $\oplus_{a\geq 0} H^0(X, aL)$ and the Cox-ring, $\oplus_{ L\in
  Pic(X)} H^0(X, L)$, are central in understanding the defining
equations of a variety and also play an important role in birational
geometry (in particular in the Minimal Model Program). For special
line bundles, the algorithm gives a computational method to determine
the dimension of the graded pieces of the ring $\oplus_{a\geq 0}
H^0(X, aL).$ The cohomology vanishing assumption might seem strong but
it is satisfied for important classes like toric varieties and Abelian
varieties. Similarly the Kodaira vanishing theorem assures that, when
$L$ is an ample line bundle, $H^i(X,K_X+aL)=0$ for $i\geq 1$.
 
\noindent{\bf Similar work and future directions.} The degree of the
$j^{th}$ polar variety is equal to the degree of the $j^{th}$ {\it
  coisotropic variety}. The coisotropic varieties $CH_i(X)$ are
hypersurfaces of a Grassmannian variety defined and studied in
\cite{GKZ}. Recent advances can be found in \cite{K17}. Along a
similar line of approach one might consider computing the intersection
formally on a Grassmannian. Consider the Gauss map $\gamma: X
\rightarrow \Gr{n}{r}$, which is generically birational when $X$ is
not a linear space. Instead of intersecting the polar classes on $X$
in order to get intersections of Chern classes of $X$ one could
compute the class of $\gamma(X)$ in the Chow ring of the Grassmannian.
Finally it is important to point out that the method using polar
geometry was introduced in \cite{BL} where the case when the divisor
is a hyperplane section is treated.

 A development of theoretical tools for Polar classes of singular varieties can be found in \cite{P1,P2,P3}. Algorithms and computations of characteristic classes for singular varieties can be found in \cite{Jo, H}. The Euler characteristic algorithm relies on the Hirzebruch-Riemann-Roch formula which for singular varieties has been generalized by Baum-Fulton-MacPherson \cite{BFM} and relies on localized Chern characters.

\noindent{\bf Acknowledgments.} This work benefited from discussions held at Notre Dame and KTH. We are grateful to both institutions 
for generous support. In particular, we thank Andrew Sommese for posing the initial question that led to this note.

\section{Notation and Background}\label{intro}
Throughout this paper $X$ denotes a smooth $n$-dimensional complex
projective variety, $T_X$ denotes its tangent bundle and $c_0, c_1,
\dots, c_n$ denote the Chern classes of the tangent bundle. We use
$c(T_X)=c_0+c_1+\dots+c_n$ for the total Chern class. The Todd class
of $T_X$, also called the Todd class of $X$, is denoted $\td{X}$. For
a vector bundle $E$ on $X$, $\ch{E}$ denotes the Chern character of
$E$.

Given a cycle class $\alpha$ in the intersection ring
$A_{\ast}(X)=\bigoplus_{k=0}^n A_k(X)$, we use $\{\alpha\}_k$ to
denote the projection of $\alpha$ to $A_k(X)$. For a zero-cycle class
$\alpha \in A_0(X)$ represented by $\alpha=\sum_{i=1}^m n_ip_i$ with
$n_i \in \ZZ$ and $p_i \in X$, we define $\int_X \alpha=\sum_{i=1}^m
n_i$ (which is well defined on rational equivalence classes). This is
extended to any $\alpha \in A(X)$ by putting $\int_X \alpha=\int_X
\{\alpha\}_0$. In this paper, by the \emph{degree} of a class we will
mean the degree relative to an embedding of $X$ in $\PP^r$ in the
following sense: if $H \in A_{n-1}(X)$ is the hyperplane class and
$\alpha \in A_k(X)$, then $\deg{\alpha}=\int_X H^k\alpha$. For a subscheme $Z \subseteq X$, we have a corresponding cycle class $[Z] \in A_{\ast}(X)$. 

Let $L$ be a line bundle on $X$.  The Euler characteristic of $L$ is
denoted $\chi(X, L)$ and defined as $$\chi(X, L)=\sum_{i\geq
  0}(-1)^i \dim{H^i(X,L)}.$$ The Hirzebruch-Riemann-Roch (HRR)
formula expresses the Euler characteristic in terms of the Todd class
and the Chern character of $L$:
$$\chi(X, L)=\int_X \ch{L} \td{X}.$$

Consider a smooth divisor $D \subset X$. We will use $D$ to denote
both the divisor and its class in the intersection ring of $X$. In
order to compute the right hand side of the Hirzebruch-Riemann-Roch
formula we will use the following version of the adjunction formula
(see \cite{F} Example 3.2.12):
$$f_*(c(T_D))=c(T_X)\cdot (D-D^2+D^3-\ldots + (-1)^{n+1}D^n),$$
where $f:D \rightarrow X$ is the inclusion.

\section{Numerical Polar calculus}\label{NPC}
In this section we present an algorithm to compute the degree of any
intersection of polar classes of a non singular projective variety. A
useful special case is the degree of the individual polar classes, which we
state in a separate procedure. The algorithms may be implemented using
symbolic software such as Macaulay2 \cite{GS} or numerical algebraic geometry
software such as Bertini \cite{BHSW}.

Let $I=(g_1,\dots,g_t) \subseteq \CC[x_0,\dots,x_r]$ be a homogeneous
ideal defining $X$ and let $l_1,\dots,l_{n-j+2}$ be linear forms
defining $V$. There is a scheme structure on $P_j(X)$ imposed by the
sum of the ideal $I$ and the ideal generated by the $(r-j+2) \times
(r-j+2)$-minors of the Jacobian matrix of
$\{g_1,\dots,g_t,l_1,\dots,l_{n-j+2}\}$. Note that the dimension of
$X$ forces all of the $(r-n+1) \times (r-n+1)$-minors of the Jacobian
matrix of $\{g_1,\dots,g_t\}$ to vanish modulo the ideal $I$. As a
consequence, to determine the scheme structure on $P_j(X)$, it is
enough to take the sum of the ideal $I$ and the ideal generated by the
subcollection of $(r-j+2) \times (r-j+2)$-minors of the Jacobian
matrix of $\{g_1,\dots,g_t,l_1,\dots,l_{n-j+2}\}$ that involve the
final $n-j+2$ rows.

We start with a probabilistic algorithm that returns the equations
defining the power of a polar class. This is then used to
formulate a probabilistic algorithm to compute the degrees of the polar
classes.

\begin{rem} \label{rem:general-polarloci}
In practice we need to intersect general polar loci to compute the
degrees of products of polar classes, so we need to know that general
polar loci intersect properly. It is enough to show that for a purely
$k$-dimensional closed subset $W \subseteq Z$ of a smooth projective
variety $Z \subseteq \PP^r$, every component of $W \cap P_j(Z)$ has
dimension $k-j$ for a general polar locus $P_j(Z)$. This is done in
Lemma~2.2 of \cite{EBP}.
\end{rem}

\begin{algorithm}[ht]
\caption{Power of polar class (ppc)}
\begin{algorithmic} \label{alg:equations}

\REQUIRE Non-negative integers $j,m,n$ with $j \leq n$ and
generators for homogeneous ideal $(g_1,\dots,g_t) \subseteq
\CC[x_0,\dots,x_r]$ defining a smooth variety $Z \subset \PP^r$ of
dimension $n$.

  \ENSURE An ideal $J$ defining a subscheme representing $[P_j(Z)]^m$.

\STATE Let $S = \emptyset$.

\FOR{$1 \leq i \leq m$}

\STATE Let $l_1,\dots,l_{n-j+2} \in \CC[x_0,\dots,x_r]$ be random linear forms.

\STATE Let $S_0$ be the set of $(r-j+2) \times (r-j+2)$-minors of the
Jacobian matrix of $\{g_1,\dots,g_t,l_1,\dots,l_{n-j+2}\}$ that
involve the last $n-j+2$ rows. Let $S=S \cup S_0$.

\ENDFOR

\STATE Let $J$ be the ideal generated by $S$ and $g_1,\dots,g_t$.

\end{algorithmic}
\end{algorithm}

\begin{algorithm}[ht]
\caption{Degrees of polar classes}
\begin{algorithmic} \label{alg:polardegrees}

\REQUIRE Generators for homogeneous ideal $I \subseteq \CC[x_0,\dots,x_r]$ defining a smooth variety $X \subset \PP^r$ of dimension $n$.

\ENSURE The degrees of the polar classes $[P_0(X)],\dots,[P_n(X)]$.

\FOR{$0\leq j \leq n$}

\STATE Let $\deg{[P_j(X)]}=\deg{Y}$ where $Y \subset \PP^r$ is the
subscheme defined by $\ppc{j}{1}{n}{I}$.

\ENDFOR
\end{algorithmic}
\end{algorithm}

\subsection{The dual degree and the Euclidean distance degree}\label{dual}
Polar loci are defined for any variety (possibly singular):
$$P_j(X)=\overline{\{x \in X_{\text{sm}}: \dim{T_xX \cap V} \geq j-1\}}.$$ The degrees of the polar classes can be used to compute two important invariants of a projective variety: the degree of the discriminant locus and the
Euclidean distance degree.
Recall that the irreducible variety 
$$X^*=\overline{\{H\in(\PP^r)^*\text{ such that } H\text{ is tangent to }X \text{ at a smooth point}\}}$$  is called the discriminant locus (or the dual variety) of the embedding $X\hookrightarrow\PP^r.$ Its dimension and degree is given by the polar variety as:
\begin{enumerate}
\item $\codim{X^*}=n+1-{\rm max}\{ k \ | \ P_k(X) \neq \emptyset\}$,
\item If $P_n(X) \neq \emptyset$ then $\deg{P_n(X)}$ is the degree of the
  irreducible polynomial defining the hypersurface $X^*.$
\end{enumerate}

In the case when $X$ is nonsingular $[P_j(X)]=c_j(J_1(L))$ where $L$ is the line bundle defining the embedding and $J_1(L)$ denotes
the first jet bundle. Thus, polar calculus gives information on invariants of jets and on differential properties of the embedding.

Another important invariant that can be expressed in terms of degrees
of polar classes is the Euclidean distance degree (ED degree), see
\cite{DHOST}. In the affine case, this is the number of critical
points of the squared distance function from the variety to a given
(general) point in the ambient space. For projective varieties the ED degree is defined as
the ED degree of its affine cone. The ED degree of a variety $X$, $\ED{X}$, is then equal to the
sum of the degrees of the polar classes of $X$ \cite{DHOST}.

\subsection{ Intersections} We now proceed to the general case of intersection of polar classes.

\begin{algorithm}[ht]
\caption{Degrees of products of polar classes}
\begin{algorithmic} \label{alg:polarproducts}

  \REQUIRE Generators for homogeneous ideal $I \subseteq
  \CC[x_0,\dots,x_r]$ defining a smooth variety $X \subset \PP^r$ of
  dimension $n$.

  \ENSURE An array $V$ containing the degrees of all products
  $$\prod_{j=1}^n [P_j(X)]^{m_j},$$ with $0 \leq m_j \leq n$ and
  $\sum_{j=1}^n jm_j \leq n$.

  \STATE Let $M \subset \NN^{n}$, $M=\{(m_1,\dots,m_n):0 \leq m_j \leq
  n,\; \sum_{j=1}^n jm_j \leq n\}$. Let $V$ be an empty array.

\FOR{$(m_1,\dots,m_n) \in M$}

\STATE Let $K=\sum_{j=1}^n \ppc{j}{m_j}{n}{I}$.

\STATE Let $Y \subseteq \PP^r$ be the subscheme defined by $K$. Adjoin
$\deg{Y}$ to $V$.

\ENDFOR

\end{algorithmic}
\end{algorithm}

\begin{ex} \label{ex:surface-polars}
  Let $M$ be a $2\times 4$-matrix of general linear forms in 5
  variables and consider the ideal $J$ generated by all the $2\times
  2$-minors of $M$. The ideal $J$ defines a smooth rational quartic
  curve $D \subset \PP^4$. If we let $I$ be an ideal generated by
  two general degree 2 elements of $J$, then $I$ defines a smooth
  quartic surface $X$ containing $D$.

  We applied \algref{alg:polarproducts}, implemented
  in Macaulay2 \cite{GS}, to the ideal $I$. The result is:
\[
\begin{array}{l}
\deg{[P_0(X)]}=4, \\
\deg{[P_1(X)]}=8, \\
\deg{[P_2(X)]}=12,\\
\deg{[P_1(X)] \cdot [P_1(X)]}=16,\\
\deg{X^*}=12,\\
\ED{X}=24.
\end{array}
\]
\end{ex}

\section{The general algorithm for HRR} \label{sec:general}
We first illustrate the algorithm by working out the surface case and
then present the general version.

\subsection{Surfaces} \label{sec:surfaces}

For simplicity of notation we treat the case of two divisors. Consider
the case of a smooth surface $X \subset \PP^r$ and smooth curves $D,
E \subset X$ representing line bundles $L_1,L_2$ (with $D, E$ meeting in a zero-scheme). In this case, we
have
$\ch{aL_1+bL_2}=1+(aD+bE)+\frac{1}{2}(a^2D^2+2ab\hskip
1pt DE+b^2E^2)$ and
$\td{X}=1+\frac{1}{2}c_1+\frac{1}{12}(c_1^2+c_2)$. Therefore,
$$\{\ch{aL_1+bL_2}\cdot
\td{X}\}_0=\frac{1}{12}(c_1^2+c_2)+\frac{1}{2}c_1(aD+bE)+\frac{1}{2}(a^2D^2+2abDE+b^2E^2),$$
and the right hand side of the HRR formula is the degree of this
0-cycle class. For $j \in \{1, 2\}$, let $d_j=f_*(c_{j-1}(T_D))$ and
$e_j=g_*(c_{j-1}(T_E))$, where $f:D \rightarrow X, g:E\to X$ are the
inclusions. We will compute the right hand side of the HRR formula by
computing the degrees of the following classes on $X$ in the given
order: $1, c_1, c_2, c_1^2, d_1, e_1, c_1d_1, c_1e_1, d_2, e_2,
d_1e_1$. To see that this is sufficient, first observe that $d_1=D,
e_1=E$ and hence $c_1D=c_1d_1, c_1E=c_1e_1$ and $DE=d_1e_1$. For
$D^2$, we apply the adjunction formula: $f_*(c(T_D))=c(T_X)\cdot
(D-D^2)$. This implies that $d_2=c_1d_1-D^2$, and hence
$D^2=c_1d_1-d_2$. Similarly for $E^2$ we obtain $E^2=c_1e_1-e_2$.

The degrees of the classes $1, c_1, c_2, c_1^2, d_1, e_1,
c_1d_1, c_1e_1, d_2, e_2, d_1e_1$ can
be computed by intersecting polar classes of $X$, $D$ and $E$. Namely,
these numbers can be solved  successively by computing the degrees
of the following classes:
\[
\begin{array}{l}
{[P_0(X)]=1},\\
{[P_1(X)]=3H-c_1},\\
{[P_2(X)]=3H^2-2Hc_1+c_2},\\
{[P_1(X)] \cdot [P_1(X)]=9H^2-6Hc_1+c_1^2},\\
{f_*[P_0(D)]=d_1},\\
{g_*[P_0(E)]=e_1},\\
{[P_1(X)] \cdot f_*[P_0(D)]=3Hd_1-c_1d_1},\\
{[P_1(X)] \cdot g_*[P_0(E)]=3He_1-c_1e_1},\\
{f_*[P_1(D)]=2Hd_1-d_2},\\
{g_*[P_1(E)]=2He_1-e_2},\\
{f_*[P_0(D)]g_*[P_0(E)]=d_1e_1},
\end{array}
\]
where $H \in A_1(X)$ is the hyperplane class.

\subsection{ The general Euler algorithm}
We now describe the general algorithm for computing the Euler
characteristic of multiples of a line bundle. We have implemented it
using Macaulay2 but since the problem is reduced to computing the
number of solutions to a polynomial system, an alternative is to use
software for the numerical solution of such systems. The input
consists of homogeneous ideals $I, J_1,\dots,J_s \subseteq
\CC[x_0,\dots,x_r]$ where $I$ defines a smooth scheme $X \subseteq
\PP^r$ and $J_i$ defines a smooth divisor $D_i \subset X$. If $X$ has
dimension $n$ then any $d \leq n$ of the $D_i$ should meet in a
subscheme every component of which has codimension $d$. The algorithm
is probabilistic as it depends on generic choices of linear subspaces
defining polar loci of $X$ and $D_1,\dots,D_s$.  The following lemma
is an application of the HRR formula and adjunction.

\begin{lemma} \label{lemma:hrr-and-adjunction}
Let $f_i:D_i \rightarrow X, \, i=1,\ldots,s$ be inclusion maps and
let $d_{j,i}={f_i}_*(c_{j-1}(T_{D_i}))$, and $c_j=c_j(T_X)$ for $1
\leq j \leq n$. The Euler characteristic $\chi(X,\sum_{i=1}^s a_iD_i)$
may be expressed in terms of degrees of monomials in $c_1,\dots,c_n,
d_{j,i}$ at most linear in $d_{1,i},\dots,d_{n,i}$ for $i=1,\ldots,
s.$
\end{lemma}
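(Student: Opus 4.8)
The plan is to expand the right-hand side of the Hirzebruch-Riemann-Roch formula $\chi(X,\sum a_iD_i)=\int_X \ch(\sum a_iD_i)\td(X)$ and to show that, after rewriting, each term becomes a rational-coefficient combination of degrees of monomials of the required shape. First I would write $\ch(\sum_{i=1}^s a_iD_i)=\sum_{k\geq 0}\frac{1}{k!}(\sum_i a_iD_i)^k$ and $\td(X)=\sum_{k\geq 0}T_k$ where $T_k$ is a weighted-degree-$k$ polynomial in $c_1,\dots,c_k$, as recalled in Section~\ref{intro}. Multiplying and taking the degree-zero part, $\int_X\ch(\sum a_iD_i)\td(X)$ is a rational-coefficient combination of degrees $\int_X (\prod_i D_i^{b_i})\,T_k$ with $\sum_i b_i+k=n$. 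Expanding $T_k$ further, this reduces to showing that every quantity $\int_X c^{\alpha}\prod_{i=1}^s D_i^{b_i}$ (with $c^\alpha$ a monomial in the $c_j$ of appropriate weighted degree) can be expressed in terms of degrees of monomials in the $c_j$ and the pushforwards $d_{j,i}$ that are at most linear in the $d_{j,i}$ for each fixed $i$.

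The key reduction step handles the powers $D_i^{b_i}$ with $b_i\geq 2$ using the adjunction formula from Section~\ref{intro}, namely ${f_i}_*(c(T_{D_i}))=c(T_X)\cdot(D_i-D_i^2+D_i^3-\dots+(-1)^{n+1}D_i^n)$. Taking the degree-$p$ graded piece of this identity expresses $d_{p+1,i}={f_i}_*(c_p(T_{D_i}))$ as $\{c(T_X)\cdot(D_i-D_i^2+\dots)\}_{n-p}$, which after rearranging yields a formula of the form $D_i^{p}=c_1 D_i^{p-1}-d_{p,i}+(\text{terms involving }c_j D_i^{a}\text{ with }a<p)$. Iterating this downward recursion, I can rewrite any $D_i^{b_i}$, for a fixed $i$, as a $\ZZ$-linear combination of terms each of which is a product of a monomial in the $c_j$ with \emph{at most one} factor from $\{d_{1,i},d_{2,i},\dots\}$ and \emph{no} remaining power of $D_i$ (using $d_{1,i}=D_i$, as in the surface case). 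Applying this independently for each index $i=1,\dots,s$ (the classes for different $i$ multiply freely, since nothing in the recursion for index $i$ reintroduces $D_{i'}$ for $i'\neq i$), every monomial $c^\alpha\prod_i D_i^{b_i}$ becomes a combination of monomials in $c_1,\dots,c_n,d_{j,i}$ that are at most linear in $d_{1,i},\dots,d_{n,i}$ for each $i$, and the degree bounds $j\leq n$ follow from the dimension constraint $\sum_i b_i+k=n$ since each application lands in $A_{\geq 0}(X)$.

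I would then assemble these pieces: substituting the rewritten monomials back into the HRR expansion expresses $\chi(X,\sum a_iD_i)$ as a rational-coefficient polynomial in the $a_i$ whose coefficients are $\ZZ$-linear combinations of degrees of monomials of the advertised form, which is exactly the claim. The main obstacle is bookkeeping: one must verify that the downward recursion on $D_i^{b_i}$ genuinely terminates with terms linear in the $d_{j,i}$ and never produces a product of two distinct $d$'s with the same index $i$, and that the recursions for different indices are truly independent. This is where the hypothesis that any $d\leq n$ of the $D_i$ meet properly (in codimension $d$) is used implicitly — it guarantees that the products $\prod_i D_i^{b_i}$ and the mixed products of polar classes appearing later are represented by honest subschemes of the expected dimension, so the ``degrees'' in the statement are the ones computed by the NPC algorithms. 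I expect the adjunction recursion and the independence-of-indices check to be the only genuinely delicate points; the rest is a direct expansion.
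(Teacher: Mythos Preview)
Your approach is correct and essentially identical to the paper's: expand HRR into monomials $c^{\alpha}\prod_i D_i^{b_i}$, then use the adjunction identity $d_{k,i}=\sum_{j=1}^k(-1)^{j+1}D_i^{\,j}c_{k-j}$ and induct on $k$ to show each $D_i^{\,k}$ lies in the subgroup generated by $\mu\cdot d_{j,i}$ with $\mu$ a monomial in the $c_j$. One correction: the proper-intersection hypothesis on the $D_i$ is \emph{not} used in this lemma at all---the statement and its proof live entirely in $A_*(X)$, where intersection products exist formally regardless of how cycle representatives meet; that hypothesis enters only later (\propref{prop:reduction} and \algref{alg:general}) when one wants to compute the degrees by intersecting actual polar loci. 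Also, the sign on $d_{p,i}$ in your recursion alternates with $p$ rather than being uniformly negative, though this does not affect the shape of the argument.
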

\begin{proof}
By the HRR formula, $\chi(X,\sum_{i=1}^s a_iD_i)$ may be expressed in
terms of the degrees of monomials of the form $c\cdot D_1^{k_1}\cdots
D_s^{k_s}$ where $k_i \in \NN$ and $c \in A_{k_1+\cdots+k_s}(X)$ is a
monomial in $c_1(T_X), \ldots, c_n(T_X)$.

Furthermore, $c\cdot D_1^{k_1}\cdots D_s^{k_s}$ may be expressed in
terms of monomials in $c_1,\dots,c_n,d_{j,i}$ which are (at most)
linear in $d_{1,i},\dots,d_{n,i}$ for $i=1,\ldots,s$. To see this,
note that the adjunction formula for $D_i$ states that $$\sum_{j=1}^n
d_{j,i} = c(T_X)\cdot ({D_i}-{D_i}^2+{D_i}^3-\ldots +(-1)^{n+1}{D_i}^n),$$
which means that $$d_{k,i} = \sum_{j=1}^k (-1)^{j+1}D_i^jc_{k-j}$$ for
$k=1,\dots,n$. Note that $d_{1,i}=D_i$. It follows by induction over $k$
that for $k \geq 1$, $D_i^k$ lies in the subgroup of $A_k(X)$
generated by elements of the form $\mu\cdot d_{j,i}$ where $\mu$ is a
monomial in $c_1,\dots,c_n$.
\end{proof}

\begin{prop} \label{prop:reduction}
The computation of $\chi(X,\sum_{i=1}^s a_iD_i)$ may be reduced to
computing degrees of monomials in the polar classes of $X$ and
$D_1,\dots,D_s$. The monomials are at most linear in the polar classes
of $D_i$.
\end{prop}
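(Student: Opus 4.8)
The plan is to combine Lemma \ref{lemma:hrr-and-adjunction} with the explicit formula for polar classes stated in the Introduction, which allows one to invert the relationship between polar classes and Chern classes on any smooth projective variety. By Lemma \ref{lemma:hrr-and-adjunction}, it suffices to show that the degree of any monomial in $c_1,\dots,c_n, d_{1,i},\dots,d_{n,i}$ which is at most linear in each family $d_{1,i},\dots,d_{n,i}$ can be written as an integer combination of degrees of monomials in the polar classes $[P_j(X)]$ and ${f_i}_*[P_j(D_i)]$, again at most linear in the latter. Because the degree here means $\int_X H^k(\cdot)$, where $H$ is the hyperplane class, and powers of $H$ appear in the polar class formula, it is natural to work in the subalgebra of $A_\ast(X)$ generated by $H$ together with the Chern classes; the key point is that this subalgebra is also generated by $H$ together with the polar classes.

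First I would recall the polar class formula $[P_j(X)] = \sum_{i=0}^j (-1)^i \binom{n-i+1}{j-i} H^{j-i} c_i$ and observe that it is, for each $j$, of the form $[P_j(X)] = (-1)^j c_j + (\text{terms involving } H \text{ and } c_0,\dots,c_{j-1})$, since the $i=j$ term has coefficient $(-1)^j\binom{n-j+1}{0}=(-1)^j$. Hence by induction on $j$ one solves successively for each $c_j$ as an integer combination of $[P_0(X)],\dots,[P_j(X)]$ and powers of $H$ times lower $c_i$'s; concretely $c_j \in \ZZ[H]\text{-span of } [P_0(X)],\dots,[P_j(X)]$. The same reasoning applied to the smooth variety $D_i$ (embedded in $\PP^r$ via the restriction of $\mathcal O_X(1)$, with its own hyperplane class the restriction of $H$) expresses $c_{j-1}(T_{D_i})$ in terms of $[P_0(D_i)],\dots,[P_{j-1}(D_i)]$ and powers of $H|_{D_i}$; pushing forward along $f_i$ and using the projection formula ${f_i}_*\big((H|_{D_i})^a \cdot \beta\big) = H^a \cdot {f_i}_*\beta$ turns this into an expression for $d_{j,i}$ in terms of $H^a {f_i}_*[P_b(D_i)]$ with $a+b = j-1$. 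Crucially this expression is linear in the polar classes of $D_i$.

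Next I would substitute these expressions into a monomial $\mu \cdot d_{j_1,i_1}\cdots d_{j_t,i_t}$ (with the indices $i_1,\dots,i_t$ pairwise distinct, reflecting the at-most-linear condition) where $\mu$ is a monomial in $c_1,\dots,c_n$. Replacing each $c_j$ by its expression in $[P_\bullet(X)]$ and $H$, and each $d_{j,i}$ by its expression in ${f_i}_*[P_\bullet(D_i)]$ and $H$, and repeatedly invoking the projection formula to pull all powers of $H$ and all polar classes of $X$ outside the pushforwards, I obtain $\mu\cdot d_{j_1,i_1}\cdots d_{j_t,i_t}$ as an integer combination of terms of the form $H^a \cdot \big(\prod_j [P_j(X)]^{m_j}\big)\cdot {f_{i_1}}_*[P_{b_1}(D_{i_1})]\cdots {f_{i_t}}_*[P_{b_t}(D_{i_t})]$. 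Taking degrees, i.e. applying $\int_X H^{k}(\cdot)$ for the appropriate $k$ so that the class lands in $A_0(X)$, absorbs the extra $H^a$ into a higher power of $H$, and the degree of such a term is by definition the degree of the product of polar classes $\prod_j [P_j(X)]^{m_j}\cdot \prod_\ell {f_{i_\ell}}_*[P_{b_\ell}(D_{i_\ell})]$ — linear in the polar classes of each $D_i$, since at most one factor ${f_i}_*[P_\bullet(D_i)]$ appears for each $i$. This establishes the claim.

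The main obstacle I anticipate is purely bookkeeping rather than conceptual: one must verify that the at-most-linear condition is preserved under all of these substitutions. Linearity in $d_{j,i}$ in the hypothesis guarantees that at most one factor $d_{\bullet,i}$ appears for each $i$; each such factor gets replaced by something linear in ${f_i}_*[P_\bullet(D_i)]$, so linearity in the polar classes of $D_i$ is indeed maintained, and there is no interaction between different $i$'s because the pushforwards ${f_i}_*$ and ${f_{i'}}_*$ act on classes from different subvarieties and are only ever multiplied inside $A_\ast(X)$ after pushing forward. A secondary point worth stating carefully is that $P_j(D_i)$ is taken with respect to the embedding $D_i \hookrightarrow \PP^r$, so that its polar classes involve $H|_{D_i}$ and the Chern classes of $T_{D_i}$ in exactly the form needed; this is where one uses that $D_i$ is itself a smooth projective variety in $\PP^r$, so the Introduction's formula applies verbatim to it.
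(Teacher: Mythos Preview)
Your proposal is correct and follows essentially the same route as the paper: invoke \lemmaref{lemma:hrr-and-adjunction}, then invert the polar--Chern relation on $X$ and on each $D_i$ (using the projection formula for the latter) to rewrite each $c_j$ and $d_{j,i}$ in terms of $H$ and polar classes, preserving linearity in the polar classes of $D_i$. The only cosmetic difference is that the paper writes down the closed-form inverse $c_j=\sum_{i=0}^j(-1)^i\binom{n-i+1}{j-i}H^{j-i}[P_i(X)]$ (and the analogous formula for $d_{j+1,i}$) rather than arguing the invertibility by induction on $j$.
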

\begin{proof}
By \lemmaref{lemma:hrr-and-adjunction}, $\chi(X,\sum_{i=1}^s a_iD_i)$
can be expressed in terms of degrees of monomials in the classes
$c_1,\dots,c_n,d_{j,i}$, at most linear in
$d_{1,i},\dots,d_{n,i}$. The statement is clear once we express the
classes $c_1,\dots,c_n,d_{j,i}$ in terms of polar classes of $X$,
$D_1,\dots,D_s$ and the hyperplane class $H \in A_*(X)$. Recall that
for $0 \leq j \leq n$,
$$[P_j(X)]=\sum_{i=0}^j (-1)^i \binom{n-i+1}{j-i}H^{j-i}c_i.$$
Inverting this relationship gives, for $0 \leq j \leq
n$, $$c_j=\sum_{i=0}^j (-1)^i \binom{n-i+1}{j-i}H^{j-i}[P_i(X)].$$
In the same way, we get for $1\leq i \leq s$ and $0 \leq j \leq n-1$
that
$$d_{j+1,i}=\sum_{l=0}^j (-1)^l
\binom{n-l}{j-l}H^{j-l}{f_i}_*[P_l(D_i)].$$
\end{proof}

By \propref{prop:reduction}, \remref{rem:general-polarloci} and the
assumption regarding proper intersection of the $D_i$, the problem of
computing the Euler characteristic can be reduced to computing the
intersections of certain polar loci of $X$ and $D_i$ and we will state
the algorithm in this form. This is a minor extension of
\algref{alg:polarproducts} to include polar classes of the divisors
$D_i$. To compute the degrees of these intersections we need a routine
that computes the degree of a projective variety. Such a routine will
be assumed given and is used as a building block in the algorithm.
The main algorithm \algref{alg:general} will make use of the
subroutine \algref{alg:equations} (ppc) which returns the equations
defining a power of a polar class.

\begin{algorithm}[ht]
\caption{Euler Characteristics}
\begin{algorithmic} \label{alg:general}

  \REQUIRE Generators for homogeneous ideal $I \subseteq
  \CC[x_0,\dots,x_r]$ defining a smooth variety $X \subset \PP^r$ of
  dimension $n$ and ideals $J_1 \supset I, \dots,J_s \supset I$
  defining smooth divisors $f_i: D_i \rightarrow X$. The divisors are
  assumed to meet properly in the sense that any $d \leq n$ of them meet
  in a subscheme every component of which has codimension $d$.

  \ENSURE An array $V$ containing the degrees of all products
  $$\prod_{i=1}^s {f_i}_*([P_{{k_i}-1}(D_i)])^{a_i} \prod_{j=1}^n
          [P_j(X)]^{m_j},$$ with $0 \leq m_j \leq n$, $1\leq k_i\leq
          n$, $a_i \in \{0,1\}$ and $\sum_{i=1}^s a_ik_i+\sum_{j=1}^n
          jm_j \leq n$.

  \STATE Let $M \subset
  \NN^{n+2s}$, $$M=\{(m_1,\dots,m_n,k_1,\dots,k_s,a_1,\dots,a_s):0
  \leq m_j \leq n, 1\leq k_i \leq n, 0 \leq a_i \leq 1,\sum_{i=1}^s
  a_ik_i+\sum_{j=1}^n jm_j \leq n\}.$$ Let $V$ be an empty array.

\FOR{$(m_1,\dots,m_n,k_1,\dots,k_s,a_1,\dots,a_s) \in M$}

\STATE Let $A=\{i: a_i=1 \}$.

\STATE Let $K=\sum_{j=1}^n
\ppc{j}{m_j}{n}{I}+\sum_{i \in A}\ppc{k_i-1}{a_i}{n-1}{J_i}$.

\STATE Let $Y \subseteq \PP^r$ be the subscheme defined by $K$. Adjoin
$\deg{Y}$ to $V$.

\ENDFOR

\end{algorithmic}
\end{algorithm}

\subsection{Complexity}

In \cite{BL}, the problem of computing the Hilbert polynomial of a
smooth equidimensional projective variety is reduced in polynomial
time to the problem of computing the number of solutions to a
polynomial system. In this paper we have a similar
reduction for computing the Euler characteristic of a
line bundle on a projective variety. The complexity of this reduction
is not the focus of this paper but there are a few important points to
make in this context.

First, computing degrees of polar varieties and their intersections by
forming minors of the Jacobian matrix extended by linear forms leads
to exponential size, which is unnecessary (see \cite{BL} Example
3.11).

Second, the treatment in \algref{alg:general} is redundant since not
all monomials of the form $c\cdot D_1^{k_1}\cdots D_s^{k_s}$ (with
notation as in the proof of \lemmaref{lemma:hrr-and-adjunction})
appear in the right hand side of the HRR formula. 
For example, in the
case of a threefold $X$, $c_3(T_X)$ does not appear at all. A more
efficient algorithm would only compute the monomials that are needed.

\section{A gallery of examples}
\begin{ex} \label{ex:surface}
  Let $M$ be a $2\times 4$-matrix of general linear forms in 5
  variables. Consider the ideal $J$ generated by the $2\times
  2$-minors of $M$. The ideal $J$ defines a smooth rational quartic
  curve $D \subset \PP^4$. If we let $I$ be an ideal generated by
  two general degree 2 elements of $J$, then $I$ defines a smooth
  quartic surface $X$ containing $D$.

  We applied \algref{alg:general} in \secref{sec:general}, implemented
  in Macaulay2 \cite{GS}, to the ideals $I$ and $J$. The result is:
\[
\begin{array}{l}
\deg{[P_0(X)]}=4, \\
\deg{[P_1(X)]}=8, \\
\deg{[P_2(X)]}=12,\\
\deg{[P_1(X)] \cdot [P_1(X)]}=16,\\
\deg{f_*[P_0(D)]}=4,\\
\deg{[P_1(X)] \cdot f_*[P_0(D)]}=8,\\
\deg{f_*[P_1(D)]}=6,\\
\dim{X^*}=3, \deg{X^*}=12,\\
\ED{X}=24,
\end{array}
\]
where $f:D \rightarrow X$ is the inclusion. Let $H$ be the hyperplane
class of $X$. For the Chern classes of $X$, the table above and the
discussion in \secref{sec:surfaces} give
$\deg{c_1}=\deg{3H-[P_1(X)]}=12-8=4$ and
$\deg{c_2}=\deg{[P_2(X)]-3H^2+2Hc_1}=12-12+2\cdot 4=8$. Moreover,
$\deg{c_1^2}=\deg{[P_1(X)]^2-9H^2+6Hc_1}=16-9\cdot4+6\cdot 4=4$. Let
$d_j=f_*(c_{j-1}(T_D))$ for $j \in \{1,2\}$. Then $\deg{d_1}=4$ and
$\deg{c_1d_1}=\deg{3Hd_1-[P_1(X)] \cdot f_*[P_0(D)]}=12-8=4$. Also,
$\deg{d_2}=\deg{2Hd_1-f_*[P_1(D)]}=8-6=2$. Finally, we have that
$\deg{c_1D}=\deg{c_1d_1}=4$ and by the adjunction formula
$\deg{D^2}=\deg{c_1d_1-d_2}=4-2=2$. Putting this together we get that
for $a \in \ZZ$:
$$\chi(X,
aD)=\frac{1}{12}(4+8)+\frac{1}{2}4a+\frac{1}{2}2a^2=1+2a+a^2.$$ 

\end{ex}

\begin{ex} \label{ex:surface2}
For this example we consider the Veronese embedding $$\nu: \PP^2
\rightarrow \PP^5:(x,y,z) \mapsto (x^2,y^2,z^2,xy,xz,yz).$$ The image
$X=\pi(\nu(\PP^2))$ of the projection $$\pi:\PP^5 \rightarrow
\PP^4:(a,b,c,d,e,f) \mapsto (a+c,b+c,d,e,f)$$ is smooth. Consider
a divisor $D \subset X$ corresponding to a general cubic curve in
$\PP^2$. The equations for $X$ and $D$ embedded in $\PP^4$ can be
found via elimination, let the corresponding ideals be denoted $I$ and
$J$. We will consider the Euler characteristic of the line bundle
corresponding to $aD+bH$, where $H \in A_1(X)$ is the hyperplane class
and $a,b \in \ZZ$.

Running \algref{alg:general} in \secref{sec:general} on $I$ and $J$, results in the following:

\[
\begin{array}{l}
\deg{[P_0(X)]}=4, \\
\deg{[P_1(X)]}=6, \\
\deg{[P_2(X)]}=3,\\
\deg{[P_1(X)] \cdot [P_1(X)]}=9,\\
\deg{f_*[P_0(D)]}=6,\\
\deg{[P_1(X)] \cdot f_*[P_0(D)]}=9,\\
\deg{f_*[P_1(D)]}=12,\\
\dim{X^*}=3, \deg{X^*}=3,\\
\ED{X}=13.
\end{array}
\]

Solving for the degrees of the needed monomials in the Chern classes
and $D$, we get the result in \tabref{tab:result-surf}.

\begin{table}[ht]
  \caption{}
\begin{tabular}{llllllllll} \label{tab:result-surf}
& $H^2$ & $c_1$ & $c_2$ & $c_1^2$ & $d_1$ & $c_1d_1$ & $d_2$ & $c_1D$ & $D^2$ \\
degree & 4 & 6 & 3 & 9 & 6 & 9 & 0 & 9 & 9 \\
\hline
\end{tabular}
\end{table}

For the Euler characteristic of $aD+bH$ we get:

$$\chi(X,aD+bH) = \frac{1}{12}(c_1^2+c_2)+\frac{1}{2}c_1(aD+bH)+\frac{1}{2}(aD+bH)^2 = 1 + \frac{9}{2}a + 3b + \frac{9}{2}a^2 + 6ab + 2b^2.$$

\end{ex}

\begin{ex}

 Let $J \subseteq \CC[x_0,\dots,x_5]$ be generated by three general
 forms of degree 2 and let $D$ be the corresponding complete
 intersection surface. If we let $I$ be an ideal generated by two
 general degree 2 elements of $J$, then $I$ defines a smooth quartic
 threefold $X$ containing $D$.

Running \algref{alg:general} on $I$ and $J$ gives the following
result:
\[
\begin{array}{l}
\deg{[P_0(X)]}=4,\\
\deg{[P_1(X)]}=8,\\
\deg{[P_2(X)]}=12,\\
\deg{[P_3(X)]}=16,\\
\deg{[P_1(X)] \cdot [P_1(X)]}=16,\\
\deg{[P_1(X)] \cdot [P_2(X)]}=24,\\
\deg{f_*[P_0(D)]}=8,\\
\deg{[P_1(X)] \cdot f_*[P_0(D)]}=16,\\
\deg{[P_2(X)] \cdot f_*[P_0(D)]}=24,\\
\deg{[P_1(X)] \cdot [P_1(X)] \cdot f_*[P_0(D)]}=32,\\
\deg{f_*[P_1(D)]}=24,\\
\deg{[P_1(X)] \cdot f_*[P_1(D)]}=48,\\
\deg{f_*[P_2(D)]}=48.
\end{array}
\]
Solving for the degrees of the needed monomials in the Chern classes
and $D$ as above we get the result in \tabref{tab:result}. 

\begin{table}[ht]
  \caption{}
\begin{tabular}{llllllllllllllll} \label{tab:result}
& $c_1$ & $c_2$ & $c_1^2$ & $c_1c_2$ & $d_1$ & $c_1d_1$ & $c_2d_1$ & $c_1^2d_1$ & $d_2$ & $c_1d_2$ & $d_3$ & $c_1^2D$ & $c_2D$ & $c_1D^2$ & $D^3$\\
degree & 8 & 12 & 16 & 24 & 8 & 16 & 24 & 32 & 0 & 0 & 24 & 32 & 24 & 32 & 32\\
\hline
\end{tabular}
\end{table}

This results in the Euler characteristic 

$$\chi(X, aD)=1+\frac{14}{3}a+8a^2+\frac{16}{3}a^3.$$
Notice that the algorithm gives $\dim{X^*}=4, \deg{X^*}=16$ and $\ED{X}=40.$
\end{ex}

\begin{ex}
Let $M$ be a $2\times 3$-matrix of general linear forms in
$\CC[x_0,\ldots,x_5]$ and consider the variety $X$ defined by the
$2\times 2$-minors of $M$. The threefold $X$ is isomorphic to $\PP^1
\times \PP^2$ and we may consider a general divisor $D$ of type
$(1,2)$. To get equations for $D$ one can pick a random
multihomogeneous polynomial $p$ in $\CC[a,b,x,y,z]$ of type $(1,2)$,
consider the graph of the Segre embedding $\PP^1 \times \PP^2
\rightarrow \PP^5$ intersected with $\{p=0\}$, and eliminate the
variables $a,b,x,y,z$.

Running \algref{alg:general} on the ideals defining $X$ and $D$ gives
the following result:
\[
\begin{array}{l}
\deg{[P_0(X)]}=3,\\
\deg{[P_1(X)]}=4,\\
\deg{[P_2(X)]}=3,\\
\deg{[P_3(X)]}=0\\
\deg{[P_1(X)] \cdot [P_1(X)]}=5,\\
\deg{[P_1(X)] \cdot [P_2(X)]}=3,\\
\deg{f_*[P_0(D)]}=5,\\
\deg{[P_1(X)] \cdot f_*[P_0(D)]}=7,\\
\deg{[P_2(X)] \cdot f_*[P_0(D)]}=6,\\
\deg{[P_1(X)] \cdot [P_1(X)] \cdot f_*[P_0(D)]}=9,\\
\deg{f_*[P_1(D)]}=10,\\
\deg{[P_1(X)] \cdot f_*[P_1(D)]}=14,\\
\deg{f_*[P_2(D)]}=12.
\end{array}
\]
Solving for the degrees of the needed monomials in the Chern classes
and $D$ as above we get the result in \tabref{tab:result2}.

\begin{table}[ht]
  \caption{}
\begin{tabular}{llllllllllllllll} \label{tab:result2}
& $c_1$ & $c_2$ & $c_1^2$ & $c_1c_2$ & $d_1$ & $c_1d_1$ & $c_2d_1$ & $c_1^2d_1$ & $d_2$ & $c_1d_2$ & $d_3$ & $c_1^2D$ & $c_2D$ & $c_1D^2$ & $D^3$\\
degree & 8 & 9 & 21 & 24 & 5 & 13 & 15 & 33 & 5 & 13 & 7 & 33 & 15 & 20 & 12\\
\hline
\end{tabular}
\end{table}

This results in the Euler characteristic

$$\chi(X, aD)=1+4a+5a^2+2a^3.$$

Notice that $\ED{X}=10$ and $\dim{X^*}=3$ since $\deg{[P_3(X)]}=0$ and
$\deg{[P_2(X)]}=3$.
\end{ex}

\end{document}